\numberwithin{equation}{section}
\newtheorem{proposition}{Proposition}[section]
 \newtheorem{corollary}{Corollary}[section]
 \newtheorem{definition}{Definition}[section]
\newtheorem{remark}{Remark}[section]
\theoremstyle{definition}
\newtheorem{theorem}{Theorem}[section]
\theoremstyle{plain}
\newtheorem{lemma}{Lemma}[section]
 \theoremstyle{definition}
\title[]{ A class of irreducible modules for loop-Virasoro algebras}
\author[]{Priyanshu Chakraborty AND Punita Batra }
\address{Priyanshu Chakraborty, Harish-Chandra Research Institute (HBNI), Chhatnag Road, Jhunsi, Prayagraj(Allahabad) 211019, Uttar Pradesh, India.}
\email{priyanshuchakraborty@hri.res.in}
\address{Punita Batra, Harish-Chandra Research Institute (HBNI), Chhatnag Road, Jhunsi, Prayagraj(Allahabad) 211019, Uttar Pradesh, India.}
\email{batra@hri.res.in}
\begin{document}
\begin{abstract}
Tensor product of highest weight modules and intermediate modules for Virasoro algebra have been studied around 1997. Since then the irreducibility problem for tensor product of modules is open. We consider the loop-Virasoro algebra $Vir \otimes B$, where $Vir$ is the Virasoro algebra and  $B$ a commutative associative unital algebra over $\mathbb C$. In this paper we study the irreducibility problem for the tensor product of highest weight modules and intermediate modules for $Vir\otimes B$. Finally we find out a necessary and sufficient conditions for such modules to be isomorphic. 
\end{abstract}

\maketitle
Keywords: {Virasoro algebra, Verma module, Intermediate module}\\

MSC 2020: {17B65,17B66,17B68}

\section{Introduction}
Importance of the representation theory of infinite dimensional Lie algebras are well known in both mathematics and physics. Many mathematicians studied representations for the Lie algebra of the derivation algebra of $A=\mathbb C[t_1^{\pm 1}, t_2^{\pm 1},...,t_n^{\pm 1}]$, so called Witt algebra denoted by $W_n$ or $DerA_n$, for references \cite{16,17,18}. In particular, for $n=1$ the universal central extension for the derivation algebra of $A$ is known as the Virasoro algebra, generally denoted by $Vir$. Representations for  Virasoro algebra have been well studied by several mathematicians, for references \cite{2,3,4}. In \cite{4}, O.Mathieu proved that a simple module for $Vir$ with finite dimensional weight spaces is either a highest weight module, a lowest weight module or a uniformly bounded module of intermediate series. Later in \cite{2}, it was proved that an irreducible module with an infinite dimensional weight space should have all infinite dimensional weight space. From then a large class of irreducible modules for $Vir$ with infinite dimensional weight spaces have been found. Simple modules with infinite dimensional weight spaces for $Vir$ were constructed in \cite{5} for the first time, by taking tensor product of highest weight modules and intermediate modules. Later isomorphism class of above mentioned modules have been found in \cite{6}. Another large class of irreducible modules for $Vir$ with infinite dimensional weight spaces have been found in \cite{7} using non-weights modules introduced in \cite{8}. \\

On the other hand, in present days mathematicians are showing interest to study representations of loop algebras of some well known  Lie algebras. For instance, representations of loop algebras of well known Lie algebras have been studied in, \cite{9,10,11,12,13,14,15}. Representations of loop of Virasoro algebras have been studied in \cite{9,10}.  For $B=\mathbb C[t^{\pm 1}]$, complete classification of irreducible $Vir\otimes B$-modules have been done in \cite{9}. Later this work was generalized by A.Savage for $Vir \otimes B$, where $B$ is a commutative associative finitely generated unital algebra over $\mathbb C$ in \cite{10}. In \cite{10}, A.Savage proved that a simple module for $Vir\otimes B$ with finite dimensional weight spaces is either highest weight module, lowest weight module or uniformly bounded module. But classification of irreducible $Vir\otimes B$ modules with a finite dimensional weight space is still open. In this paper we have constructed a class of irreducible modules for $Vir \otimes B$, for a commutative associative unital algebra $B$ over $\mathbb C$ with infinite dimensional weight spaces. \\

This paper has been organised as follows. In section 2, we start with basis definitions and preliminaries. We define Verma module $M(\phi)$ for $Vir\otimes B$ and define a $Vir\otimes B$ module structure on intermediate modules for $Vir$, these modules are denoted by $V_{\alpha,\beta,\psi},$ where $\alpha, \beta \in \mathbb C$ and $\psi:B\to \mathbb C$ be an algebra morphism with $\psi(1)=1$. Then we consider the tensor product of irreducible quotient $V(\phi)$ of $M(\phi)$ and the irreducible modules obtained from the family $V_{\alpha,\beta,\psi},$ denote these irreducible modules as $V'_{\alpha,\beta,\psi}$. We denote $V^\phi_{\alpha,\beta,\psi}=V(\phi)\otimes V'_{\alpha,\beta,\psi},$ and study the properties of these modules in section 3. Finally in Section 4, we find isomorphism classes of these modules.

\section{Notations and Preliminaries}
\subsection*{(1)} Throughout the paper all the vector spaces, algebras, tensor products are taken over the field of complex numbers $\mathbb{C}$. Let $\mathbb{Z}$, $\mathbb{N}$, $\mathbb{Z}^+$ denote the sets of integers, natural numbers and non-negative integers respectively. For any Lie algebra $G$, let $ U (G)$ denote the universal enveloping algebra of $G$.
\subsection*{(2)} The Virasoro algebra is an infinite dimensional Lie algebra with the basis $\{ d_n,C:n \in \mathbb Z \}$ and defining relations
\begin{align}\label{a2.1}
[d_m,d_n]=(n-m)d_{m+n}+\delta_{m,-n}\frac{m^3-m}{12}C,\\
[d_n,C]=0,
\end{align}
for all $n,m \in \mathbb Z$.\\
We denote the Virasoro algebra as $Vir$. Clearly $Vir$ has a triangular decomposition as, $$Vir= Vir^- \oplus Vir^0 \oplus Vir^+, $$ 

where $Vir^-=\displaystyle{ \bigoplus_{n<0}}\mathbb C d_n$, $Vir^0= span\{d_0,C\}$ and $Vir^+=\displaystyle{ \bigoplus_{n>0}}\mathbb C d_n$.
\subsection*{(3)} It is well known from \cite{1}, that there is a class of intermediate modules $V_{\alpha, \beta}$ for $Vir$ with two parameters $\alpha, \beta \in \mathbb C$. As a vector space $V_{\alpha, \beta}=\displaystyle{ \bigoplus_{n \in \mathbb Z}}\mathbb C v_n$ and $Vir$ action on $V_{\alpha, \beta}$ is given by,
\begin{align}\label{a2.3}
d_n.v_k=(\alpha+k+n\beta)v_{k+n},\\
C.v_k=0,
\end{align}
for all $n,k \in \mathbb Z$. \\
For convenience we denote $V'_{\alpha, \beta}= V_{\alpha,\beta}$, if $V_{\alpha, \beta}$ is irreducible.
\begin{lemma}\label{l2.1}$($\cite{1}$)$
\item[1.] The $Vir$ module $V_{\alpha, \beta} \simeq V_{\alpha+m, \beta}$ for all $m \in \mathbb Z$.\\

\item[2.] The $Vir$ module $V_{\alpha, \beta}$ is irreducible if and only if $\alpha \notin \mathbb Z $  and $\beta \notin \{0,1\}$.\\
\item[3.] $V_{0,0}$ has a unique non-trivial proper sub-module $\mathbb Cv_0$ and denote $V_{0,0}'=V_{0,0}/\mathbb Cv_0$.\\
\item[4.] $V_{0,1}$ has a unique non-zero proper sub-module $V'_{0,1}=\displaystyle{\bigoplus_{i\neq 0}} \mathbb C v_i$. \\
\item[5.]  $V'_{\alpha,0}\simeq V'_{\alpha,1}$ for all $\alpha \in \mathbb C$.

\end{lemma}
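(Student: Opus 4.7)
The plan is to verify each of the five parts by direct calculation with the defining formula $d_n \cdot v_k = (\alpha + k + n\beta)v_{k+n}$. The underlying theme is that every $v_k$ is a $d_0$-eigenvector with eigenvalue $\alpha+k$, and these eigenvalues are distinct, so every submodule is a direct sum of one-dimensional weight spaces; the question is which subsets $S \subseteq \mathbb{Z}$ can arise as $\{k : v_k \in W\}$.

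For part 1, I would define $\phi_m : V_{\alpha+m,\beta} \to V_{\alpha,\beta}$ by $\phi_m(v_k) = v_{k+m}$ on basis vectors and extend linearly; a one-line computation shows both sides produce $(\alpha + m + k + n\beta)v_{k+m+n}$, so this is a $Vir$-module isomorphism. Parts 3 and 4 are immediate structural checks: with $\alpha=\beta=0$ the action becomes $d_n v_k = k v_{k+n}$, so $\mathbb{C}v_0$ is killed and any submodule containing $v_k$ with $k\neq 0$ reaches every $v_j$ via $d_{j-k}$; with $\alpha=0,\beta=1$ the action becomes $d_n v_k = (k+n)v_{k+n}$, so the orbit of any $v_k$ with $k\neq 0$ never hits $v_0$ (the coefficient $k+n$ vanishes precisely when $k+n=0$), while $d_n v_0 = n v_n$ shows any submodule containing $v_0$ is the whole space. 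Uniqueness of the proper nontrivial submodule then follows from the weight-space decomposition of any submodule.

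Part 2 is the most delicate and is the main obstacle. The ``only if'' direction follows from parts 3 and 4 combined with part 1 (to translate $\alpha \in \mathbb{Z}$ down to $\alpha=0$). For the ``if'' direction, assuming $\alpha \notin \mathbb{Z}$ or $\beta \notin \{0,1\}$, one takes a nonzero submodule $W = \bigoplus_{k \in S}\mathbb{C}v_k$ and shows $S=\mathbb{Z}$. The key observation is that $d_n v_k$ equals $(\alpha+k+n\beta)v_{k+n}$, whose coefficient can vanish for at most one value of $n$ (given $k$), unless $\beta=0$ in which case it never vanishes when $\alpha+k\neq 0$. The case analysis one must handle is: if $\alpha\notin\mathbb{Z}$ then $\alpha+k+n\beta$ either never vanishes (when $\beta=0$) or vanishes only for a single non-integer $n$ that can't be used anyway; if $\alpha\in\mathbb{Z}$ and $\beta\notin\{0,1\}$ one may reduce to $\alpha=0$ and observe that from $v_0$ all $v_n$ are reached (coefficient $n\beta\neq 0$) and from any $v_k$ with $k\neq 0$ one reaches $v_0$ via $d_{-k}$ (coefficient $k(1-\beta)\neq 0$). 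In each subcase one might have to route through an intermediate step to circumvent a single vanishing coefficient, but the conclusion is that $W$ is the whole module.

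Finally, for part 5, I would look for a diagonal intertwiner $\phi(v_k) = c_k v_k$. The intertwining condition with the $V_{\alpha,0}$ action on the source and the $V_{\alpha,1}$ action on the target forces the recursion $c_{k+n}(\alpha+k) = c_k(\alpha+k+n)$, solved by $c_k = \alpha+k$ when $\alpha \notin \mathbb{Z}$ (so $c_k\neq 0$ and $\phi$ is a bijection). When $\alpha \in \mathbb{Z}$, use part 1 to reduce to the case $\alpha=0$, and define $\phi: V'_{0,0} \to V'_{0,1}$ by $\overline{v_k} \mapsto k\,v_k$ for $k\neq 0$; the recursion $k\,c_{k+n} = c_k(k+n)$ is satisfied by $c_k=k$, and $\phi$ is a linear bijection between the two spaces (both indexed by $\{k \in \mathbb{Z} : k\neq 0\}$). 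This completes the lemma; modulo the case-splitting in part 2, the rest of the proof is direct verification.
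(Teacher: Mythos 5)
The paper offers no proof of this lemma at all --- it is quoted from Kac--Raina \cite{1} --- so there is nothing to compare your argument against line by line; what you supply is the standard direct verification (every $v_k$ is a $d_0$-eigenvector of eigenvalue $\alpha+k$ with distinct eigenvalues, so submodules are spanned by the $v_k$ they contain, and one tracks which indices are reachable). Parts 1, 3, 4 and 5 of your argument are correct as sketched; in particular the intertwiners $v_k\mapsto v_{k+m}$ and $v_k\mapsto(\alpha+k)v_k$ (respectively $\overline{v_k}\mapsto k\,v_k$ in the integral case) do satisfy the required recursions.

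The one point you should make explicit is that your proof of part 2 establishes the classical criterion, namely that $V_{\alpha,\beta}$ is irreducible if and only if \emph{not} both $\alpha\in\mathbb Z$ and $\beta\in\{0,1\}$ --- i.e.\ the condition is ``$\alpha\notin\mathbb Z$ \emph{or} $\beta\notin\{0,1\}$'' --- whereas the lemma as printed says ``and''. The literal statement is false: $V_{0,1/2}$ has $\alpha=0\in\mathbb Z$ yet is irreducible, since $d_n v_0=\tfrac n2 v_n$ and $d_{-k}v_k=\tfrac k2 v_0$ never vanish for the relevant indices. Your ``if'' direction correctly assumes the disjunction, but your remark that the ``only if'' direction ``follows from parts 3 and 4 combined with part 1'' only yields reducibility at $(\alpha,\beta)\in\mathbb Z\times\{0,1\}$, which is exactly the contrapositive of the corrected statement and cannot give the printed one. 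This is evidently a slip in the paper rather than in your argument (the paper's own Remark, which declares $V'_{\alpha,\beta}$ irreducible for all $0\le \mathrm{Re}\,\alpha<1$, $\beta\neq 1$, silently relies on the ``or'' version), but you should state which version you are proving. Beyond that, the only soft spot is the case-chasing in the ``if'' direction of part 2, where a single coefficient $\alpha+k+n\beta$ can vanish for one integer $n$ and you must route through $d_1$ followed by $d_{n-1}$ (or $d_2$, $d_{n-2}$); the two-step detour fails only when $\beta=1$, which is excluded, so your outline does close up.
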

\begin{remark}
To avoid repetition through out this paper we always take $V'_{\alpha, \beta}$ for $0\leq Re \alpha <1, \beta \neq 1$, where $Re \alpha$ is the real part of $\alpha$. Clearly this class of modules are irreducible for $Vir$.
\end{remark}
\subsection*{(3)} Let $B$ be a commutative associative unital algebra. We consider the Lie algebra $Vir_B= Vir\otimes B$ with the following bracket operation, 
  $$[X\otimes b, Y \otimes b']=[X,Y]\otimes bb' $$
  for all $b,b' \in B$ and $X,Y \in Vir$. We denote $X \otimes 1$ simply as $X$ for all $X \in Vir$. Note that $Vir_B$ has a triangular decomposition as,
   $$Vir_B=Vir^-_B \oplus Vir^0_B \oplus Vir^+_B,$$
    where $Vir^-_B=Vir^-\otimes B, Vir^0_B=Vir^0\otimes B$ and $Vir^+_B=Vir^+\otimes B$.
\subsection*{(4)} We define a $Vir_B$ module structure on $V_{\alpha, \beta}$. Let $\psi:B \to \mathbb C$ be an algebra homomorphism such that $\psi(1)=1.$ Define a module structure on $V_{\alpha, \beta} $ by, 
\begin{align}
d_n\otimes b. v_k=\psi(b)(\alpha +k +n\beta)v_{k+n},\\
C\otimes b.v_k=0
\end{align}
for all $n,k \in \mathbb Z, b \in B.$
Clearly $V'_{\alpha, \beta}$ are irreducible modules for $Vir_B$, since they are so for $Vir.$ We denote this module by $V'_{\alpha, \beta, \psi}.$
\subsection*{(5)} 
\begin{definition}
A module $V$ for $Vir_B$ is said to be weight module if the action of $d_0$ on $V$ is diagonalizable, i.e $V$ can be decomposed as $V = \displaystyle{\bigoplus_{\lambda \in \mathbb C} V_\lambda}$, where  $V_\lambda=\{ v \in V: d_0v=\lambda v  \}.$ The space $V_\lambda$ is called the weight space with respect to the weight $\lambda$ of $V$ and elements of $V_\lambda$ are called weight vectors of $V$ of weight $\lambda$. Moreover we say that $V$ is a highest weight (lowest weight) module if $Vir^+_B.v=0$ $(Vir^-_B.v=0)$ for some non-zero weight vector $v$ of $V$.
\end{definition}

Let $\phi: Vir^0_B \to \mathbb C$ be a one dimensional representation of $Vir^0_B$. Extend $\mathbb C \phi$ to a module for $ Vir^0_B \oplus Vir^+_B$ by defining action of $Vir^+_B$ as zero. Then define the Verma module,
  $$M(\phi)=U(Vir_B)\bigotimes_{U(Vir^0_B \oplus Vir^+_B)}\mathbb C \phi.  $$  
Clearly $M(\phi)$ is a highest weight module with highest weight $\phi(d_0)$ and $M(\phi)=\displaystyle{\bigoplus_{i\in \mathbb Z^+}}M(\phi)_{\phi(d_0)-i}.$ Note that $\widetilde{v_\phi}=1\otimes 1_\phi$ is the highest weight vector of $M(\phi)$, where $1_\phi$ is the unit in $\mathbb C \phi$. Moreover $M(\phi) \simeq U(Vir^-_B)$ as $U(Vir^-_B)$-modules. \\

Let $N(\phi)$ be the unique maximal proper sub-module of $M(\phi)$. Then
       $$V(\phi)=M(\phi)/N(\phi)  $$
 is the irreducible highest weight module corresponding to $\phi$. It is a highest weight module with highest weight $\phi(d_0)$ and highest weight vector $v_\phi$, is the image of $\widetilde{v_\phi}$ in $V(\phi)$. Moreover $V(\phi)=\displaystyle{\bigoplus_{i\in \mathbb Z^+}}V(\phi)_{\phi(d_0)-i}.$\\
 
Consider the module $V^\phi_{\alpha, \beta, \psi}=     V(\phi)\otimes V'_{\alpha, \beta,\psi}$ for $Vir_B.$ Clearly this a weight module for $Vir_B$ and 
   $$ V^\phi_{\alpha, \beta, \psi}= \displaystyle{\bigoplus_{n\in \mathbb Z}}(V^\phi_{\alpha, \beta, \psi})_{\phi(d_0)+\alpha +n},$$
where $(V^\phi_{\alpha, \beta, \psi})_{\phi(d_0)+\alpha +n}=  \displaystyle{\bigoplus_{i\in \mathbb Z^+}}(V(\phi)_{\phi(d_0)-i}\otimes \mathbb C v_{n+i})$. Hence every weight spaces of $V^\phi_{\alpha, \beta, \psi}$ are infinite dimensional. In our discussion we always take   $0\leq Re \alpha <1$ and $\beta \neq 1$ for all modules  $V^\phi_{\alpha, \beta, \psi}$.\\
\section{Main Results}
\begin{lemma}\label{l3.1}
$ V^\phi_{\alpha, \beta, \psi}$ is generated by $\{ v_\phi \otimes v_m: m \in \mathbb Z \}$ or $\{ v_\phi \otimes v_m: m \in \mathbb Z-\{0\} \}$ according as $(\alpha,\beta)\neq (0,0)$ or $(\alpha, \beta)=(0,0)$, where $v_\phi$ is the highest weight vector of $V(\phi)$.
\end{lemma}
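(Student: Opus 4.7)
The plan is to show that the $Vir_B$-submodule $W$ generated by the indicated set of vectors exhausts $V^\phi_{\alpha,\beta,\psi}$ by an induction on the depth of the first tensor factor. Recall that $V(\phi)=\bigoplus_{i\geq 0}V(\phi)_{\phi(d_0)-i}$ with $V(\phi)_{\phi(d_0)}=\mathbb{C}v_\phi$, and that $V(\phi)$ is generated from $v_\phi$ by $U(Vir^-_B)$. Hence every $u\in V(\phi)_{\phi(d_0)-i}$ is a linear combination of monomials $(d_{-n_1}\otimes b_1)\cdots(d_{-n_r}\otimes b_r)v_\phi$ with $\sum n_j=i$. I will prove by induction on $i$ that $u\otimes v_k\in W$ for every such $u$ and every basis vector $v_k$ of $V'_{\alpha,\beta,\psi}$. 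Since $V^\phi_{\alpha,\beta,\psi}$ is the direct sum of the subspaces $V(\phi)_{\phi(d_0)-i}\otimes \mathbb{C}v_k$, this will finish the argument.

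The base case $i=0$ is immediate from the generating set (noting that $v_\phi\otimes v_0=0$ when $(\alpha,\beta)=(0,0)$, since $v_0$ is killed in the quotient $V'_{0,0}$, so no vector is actually missing from the base). For the inductive step it suffices, by linearity, to treat $u=(d_{-n}\otimes b)u'$ with $n\geq 1$ and $u'\in V(\phi)_{\phi(d_0)-(i-n)}$. The key computation is
\[
(d_{-n}\otimes b)(u'\otimes v_k)=u\otimes v_k+\psi(b)(\alpha+k-n\beta)\,u'\otimes v_{k-n}.
\]
By the inductive hypothesis at depth $i-n$, both $u'\otimes v_k$ and $u'\otimes v_{k-n}$ lie in $W$, so the left-hand side lies in $W$; rearranging places $u\otimes v_k$ in $W$ as well. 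Iterating, we recover all of $V^\phi_{\alpha,\beta,\psi}$.

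The main obstacle, mild as it will turn out, is the degenerate case $(\alpha,\beta)=(0,0)$, where the generating set omits $v_\phi\otimes v_0$ and the inductive identity above could in principle appeal to a term $u'\otimes v_0$. This is resolved by the same observation used above: in $V'_{0,0}$ the vector $v_0$ vanishes, so $u'\otimes v_{k-n}=0$ whenever $k=n$, and the identity still forces $u\otimes v_k\in W$. The remaining bookkeeping, namely passing from single PBW monomials to arbitrary elements of $V(\phi)_{\phi(d_0)-i}$ by linearity, is routine.
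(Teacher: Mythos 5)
Your proof is correct and follows essentially the same route as the paper: the paper's one-line argument (that $V(\phi)=U(Vir^-_B)v_\phi$ spans everything together with Lemma 2.1) is exactly the induction on depth you carry out, using the identity $(d_{-n}\otimes b)(u'\otimes v_k)=u\otimes v_k+\psi(b)(\alpha+k-n\beta)\,u'\otimes v_{k-n}$ to peel off the first tensor factor. Your explicit treatment of the degenerate case $(\alpha,\beta)=(0,0)$, where $v_0$ vanishes in the quotient $V'_{0,0}$, correctly fills in a detail the paper leaves implicit.
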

\begin{proof}
Note that $V(\phi)=U(Vir^-_B)v_\phi$. Consequently $ V^\phi_{\alpha, \beta, \psi}$ is spanned by $\{uv_\phi \otimes v_m: u \in U(Vir^-_B)$ and $m \in \mathbb Z$ or $\mathbb Z-\{0\} \}$. Hence the lemma follows from Lemma \ref{l2.1}. 
\end{proof}
\begin{proposition}\label{p3.1}
$End_{Vir_B}( V^\phi_{\alpha, \beta, \psi})\simeq \mathbb C$.
\end{proposition}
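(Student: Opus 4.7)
The plan is to show any $T \in End_{Vir_B}(V^\phi_{\alpha,\beta,\psi})$ acts as a scalar; by Lemma \ref{l3.1} this forces $End_{Vir_B} \simeq \mathbb C$. Since $T$ commutes with $d_0$ it preserves weight spaces, so for each admissible $n$ I write
\[
T(v_\phi \otimes v_n) = \sum_{i \geq 0} w_i^{(n)} \otimes v_{n+i}, \qquad w_i^{(n)} \in V(\phi)_{\phi(d_0)-i},
\]
a finite sum, and because $V(\phi)_{\phi(d_0)} = \mathbb C v_\phi$ the top component has the form $w_0^{(n)} = c_n v_\phi$ for some $c_n \in \mathbb C$.

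The central computational input is the commutation $T(d_m \otimes b) = (d_m \otimes b)T$ for $m \geq 1$, $b \in B$. Applied to $v_\phi \otimes v_n$, using $(d_m \otimes b)v_\phi = 0$ and matching the coefficient of $v_{n+m+i}$ on the two sides, I obtain the master relation
\[
\psi(b)(\alpha + n + m\beta)\,w_i^{(n+m)} = (d_m \otimes b)\,w_{i+m}^{(n)} + \psi(b)(\alpha + n + i + m\beta)\,w_i^{(n)}.
\]
Setting $i=0$ and $b=1$ reads $(\alpha + n + m\beta)(c_{n+m} - c_n)\,v_\phi = d_m\,w_m^{(n)}$. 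For fixed $n$ only finitely many $w_m^{(n)}$ are nonzero, so for large $m$ the right side vanishes; under the standing assumption $0 \leq Re\,\alpha < 1$, $\beta \neq 1$, the factor $\alpha + n + m\beta$ is zero for at most one $m$, forcing $c_{n+m} = c_n$ eventually. Varying $n$ and comparing (minding the restriction $n \neq 0$ when $(\alpha,\beta) = (0,0)$ imposed by Lemma \ref{l3.1}) then forces $c_n$ to equal a single constant $c$.

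Replacing $T$ by $T - c\cdot\mathrm{Id}$ I may assume $w_0^{(n)} = 0$ for every admissible $n$, and I will suppose $T \neq 0$ for a contradiction. Let $I := \min\{i \geq 1 : w_i^{(n)} \neq 0 \text{ for some } n\}$. For $0 \leq j < I$, both $w_j^{(n)}$ and $w_j^{(n+m)}$ vanish by minimality, so the master relation collapses to $(d_m \otimes b)\,w_{j+m}^{(n)} = 0$. Taking $j = I-m$ with $1 \leq m \leq I$ gives $(d_m \otimes b)w_I^{(n)} = 0$ in that range, while for $m > I$ the same vector lies in $V(\phi)_{\phi(d_0) - I + m}$, above the top weight, hence is automatically zero. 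Thus $Vir^+_B \cdot w_I^{(n)} = 0$. By PBW, $U(Vir_B)w_I^{(n)} = U(Vir^-_B)U(Vir^0_B)w_I^{(n)}$ is supported in weights $\leq \phi(d_0) - I < \phi(d_0)$, so it is a proper submodule of the irreducible $V(\phi)$, hence zero. This contradicts $w_I^{(n)} \neq 0$, so $T = c\cdot\mathrm{Id}$.

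The step I expect to be most delicate is the constancy of the sequence $(c_n)$: the clean phrase ``the scalar $\alpha + n + m\beta$ is nonzero for sufficiently large $m$'' requires checking the boundary cases permitted by the Remark after Lemma \ref{l2.1}, and at $(\alpha,\beta) = (0,0)$ one has to verify that the shift-and-compare trick still connects every pair of admissible indices even though $n = 0$ is excluded from the generating set.
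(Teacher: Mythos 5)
Your proof is correct, and its decisive step is genuinely different from the paper's. You share the same frame (weight-space preservation, expansion of $T(v_\phi\otimes v_n)$ with top coefficient $c_n v_\phi$, and the reduction to generators via Lemma \ref{l3.1}), and your derivation of $c_{n+m}=c_n$ from $d_m w_m^{(n)}=(\alpha+n+m\beta)(c_{n+m}-c_n)v_\phi$ is a cleaner packaging of the paper's Cases I--III. But where the paper kills the lower components $x_{-i}\otimes v_{m+i}$ ($i\ge 1$) by constructing an explicit quadratic element $w=d_{2n}-\lambda d_n^2$ that annihilates $v_\phi\otimes v_m$ while acting injectively (for generic $n$) on the tail --- an argument that requires the genericity conditions \ref{eq3.1}--\ref{eq3.2} and a linear-independence claim --- you instead subtract the scalar and show that the lowest surviving component $w_I^{(n)}$ would be annihilated by all of $Vir^+_B$ (the master relation handles $1\le m\le I$, weight reasons handle $m>I$), hence would be a singular vector generating a proper nonzero submodule of the irreducible $V(\phi)$. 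This singular-vector argument is the same device the paper itself uses later (in Theorem \ref{t3.1} and Corollary \ref{p3c3.1}), and it buys you a genericity-free, more structural proof; the paper's operator $w$ buys a direct, purely computational elimination that does not invoke irreducibility of $V(\phi)$ beyond $V(\phi)_{\phi(d_0)}=\mathbb C v_\phi$. The only points to tighten are the ones you already flag: for $\beta=0$ the factor $\alpha+n+m\beta=\alpha+n$ is independent of $m$, so you must observe it is nonzero for every admissible $n$ (which follows from $0\le Re\,\alpha<1$ together with the exclusion of $n=0$ when $(\alpha,\beta)=(0,0)$), and in the $(0,0)$ case the index $i=-n$ must be omitted from the expansion since $v_0=0$ in $V'_{0,0}$; neither affects the argument.
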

\begin{proof}
Let $T \in End_{Vir_B}( V^\phi_{\alpha, \beta, \psi})$. Therefore $T(v_\phi \otimes v_m)$ and $(v_\phi \otimes v_m)$ must have same weight. Let \\
$T(v_\phi \otimes v_m)= \displaystyle{\sum_{i=0}^{i=k}}c_ix_{-i}\otimes v_{m+i}$, where $x_{-i} \in V(\phi)_{\phi(d_0)-i}$, $c_i \in \mathbb C$ for $0\leq i \leq k$ and $x_0=v_\phi.$\\
Let us choose $n (> k)\in \mathbb N$ sufficiently large such that \\
\begin{equation}\label{eq3.1}
(\alpha + m+n\beta)(\alpha +n+m +n \beta) \neq 0 
\end{equation}
\begin{equation}\label{eq3.2}
(\alpha+i+m+2n\beta)(\alpha + m+n\beta)(\alpha +n+m +n \beta)\neq  (\alpha+m +2n\beta)(\alpha+i+m+n\beta)(\alpha+i+n+m+n\beta)
\end{equation}
for $1 \leq i \leq k$.

 This is possible since equality of \ref{eq3.1} and \ref{eq3.2} gives only finitely many solutions for $n$.\\
Set $w= d_{2n} -\frac{(\alpha+m+2n\beta)}{(\alpha+m+n\beta)(\alpha+m+n+n\beta)}d_n^2$, then $w.(v_\phi \otimes v_m)=0$ and $w.(x_{-i}\otimes v_{m+i}) \neq 0$, (due to \ref{eq3.2}) for $1 \leq i \leq k$. Moreover, it is easy to see that $w.(x_{-i}\otimes v_{m+i})$ are linearly independent for $1 \leq i \leq k$. \\
Therefore $T(w.v_\phi \otimes v_m)=w.T(v_\phi \otimes v_m)$ gives $c_i =0$ for $1 \leq i \leq k$. Hence we have 
  $$T(v_\phi \otimes v_m)=c_m(v_\phi\otimes v_m) $$ for some $c_m \in \mathbb C$ and for all $m \in \mathbb Z$ or $\mathbb Z -\{0\}$ according as $(\alpha,\beta)\neq (0,0)$ or $(\alpha,\beta)=(0,0)$.\\ 
  Now consider three cases.\\
 {\bf Case I :} Let us assume $\beta \neq 0.$ Fix two arbitrary integers $m,n.$ Choose $l \in \mathbb N$ in such a way that $\alpha +m + l\beta \neq 0 $ and $\alpha + n + (l+m-n)\beta \neq 0.$ Then, 
            $$T(d_l.v_\phi \otimes v_m)=d_l.T(v_\phi \otimes v_m)$$
implies that $c_{m+l}=c_m. $\\
Again, $T(d_{l+m-n}.v_\phi\otimes v_n)=d_{l+m-n}.T(v_\phi \otimes v_n)$ implies that $c_{m+l}=c_n.$ Hence we have $c_m=c_{m+l}=c_n$.\\
{\bf Case II :} Let us assume $\beta =0, \alpha \neq 0.$ Fix two arbitrary integers $m,n$. Since $0\leq Re \alpha <1$, so $\alpha + m \neq 0$ and $\alpha +n \neq 0$. Choose any $l \in \mathbb N$ and consider $T(d_l.v_\phi \otimes v_m)=d_l.T(v_\phi \otimes v_m)$, $T(d_{l+m-n}.v_\phi\otimes v_n)=d_{l+m-n}.T(v_\phi \otimes v_n)$ which gives us $c_m =c_{m+l}=c_n.$\\
{\bf Case III :} Let us assume $\beta =0, \alpha=0.$ In the same way like Case II we have $c_m=c_n$ for all $m,n \in \mathbb Z-\{0\}.$\\
Combining all the three cases we have $c_m =c $ for some $c \in \mathbb C$ and for all  $m \in \mathbb Z$ or $\mathbb Z -\{0\}$ according as $(\alpha,\beta)\neq (0,0)$ or $(\alpha,\beta)=(0,0)$. Hence we have the result by Lemma \ref{l3.1}.
\end{proof}
From the Proposition \ref{p3.1} we have, modules $V^\phi_{\alpha, \beta , \psi}$ are indecomposable. Now we find out a condition under which these modules will be irreducible. Before that we prove a necessary and sufficient condition for irreducibility of $V^\phi_{\alpha, \beta , \psi}$.
\begin{theorem}\label{t3.1}
For $\alpha+\beta \notin \mathbb Z,$   $V^\phi_{\alpha, \beta , \psi}$ is irreducible if and only if $V^\phi_{\alpha, \beta , \psi}$ is cyclic on every $v_\phi \otimes v_m$ for all $m \in \mathbb Z$.
\end{theorem}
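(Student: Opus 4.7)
The ($\Rightarrow$) implication is immediate: in an irreducible module every nonzero vector generates the whole module, and each pure tensor $v_\phi\otimes v_m$ is nonzero.

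For the converse, assume $V^\phi_{\alpha,\beta,\psi}$ is cyclic on every $v_\phi\otimes v_m$, let $W$ be a nonzero $Vir_B$-submodule, and note that the cyclicity hypothesis reduces the goal $W=V^\phi_{\alpha,\beta,\psi}$ to producing a single $m\in\mathbb{Z}$ with $v_\phi\otimes v_m\in W$. Pick a nonzero weight vector
\[w=\sum_{i=0}^{k}x_{-i}\otimes v_{n+i}\in W,\qquad x_{-i}\in V(\phi)_{\phi(d_0)-i},\]
with $k$ minimal among such expressions, and argue by induction on $k$. The base case $k=0$ is immediate since $x_0\in V(\phi)_{\phi(d_0)}=\mathbb{C}v_\phi$ forces $w$ to be a nonzero scalar multiple of $v_\phi\otimes v_n$.

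For the inductive step I would exploit the element
\[Y_{r,b}:=d_r\otimes b-\psi(b)\,d_r=d_r\otimes(b-\psi(b))\in Vir_B.\]
Since $V'_{\alpha,\beta,\psi}$ factors the $B$-action through $\psi$, one has $Y_{r,b}.v_m=0$ for every $m$, so $Y_{r,b}$ acts purely on the $V(\phi)$-factor,
\[Y_{r,b}.(x\otimes v_m)=(Y_{r,b}.x)\otimes v_m.\]
For $r>0$ the image $Y_{r,b}.x_{-i}$ sits in $V(\phi)_{\phi(d_0)-i+r}$, which vanishes whenever $r>i$. Taking $r=k$ therefore annihilates every term of $w$ except the bottom one, yielding
\[Y_{k,b}.w=(Y_{k,b}.x_{-k})\otimes v_{n+k}\in\mathbb{C}\,v_\phi\otimes v_{n+k}.\]
If some $b\in B$ gives $Y_{k,b}.x_{-k}\neq 0$ then $v_\phi\otimes v_{n+k}\in W$ and we are done; otherwise $(d_k\otimes b).x_{-k}=\psi(b)\,d_k.x_{-k}$ for all $b\in B$, and one instead has to use higher-degree elements from the subalgebra of $U(Vir_B)$ generated by $\{Y_{m,b'}:m>0,\,b'\in B\}$. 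Such elements still act trivially on $V'_{\alpha,\beta,\psi}$, and the irreducibility of $V(\phi)$ (which forbids any nonzero singular vector below $v_\phi$) is what guarantees that some product of $Y_{m,b'}$'s sends $x_{-k}$ to a nonzero multiple of $v_\phi$, completing the depth reduction.

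The hypothesis $\alpha+\beta\notin\mathbb{Z}$ enters via the fact that $\alpha+m+\beta\neq 0$ for every $m\in\mathbb{Z}$, so that $d_1$ acts bijectively on the basis $\{v_m\}$ of $V'_{\alpha,\beta,\psi}$; this non-degeneracy is what keeps the coefficients in the inductive bookkeeping nonzero and lets one translate between different $v_\phi\otimes v_m$. The main technical obstacle is precisely the degenerate case above: verifying that the subalgebra generated by the $Y_{m,b}$'s is rich enough to send every nonzero $x_{-k}\in V(\phi)_{\phi(d_0)-k}$ to a nonzero multiple of $v_\phi$, which is what makes the depth reduction work without further hypothesis.
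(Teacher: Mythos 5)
Your forward direction, the reduction to producing a single $v_\phi\otimes v_m$ in $W$, and the base case are all fine, but the inductive step has a genuine gap, and it is exactly the ``degenerate case'' you flag at the end. Your device $Y_{r,b}=d_r\otimes(b-\psi(b))$ lives in the subalgebra $Vir^+\otimes\ker\psi$ of $Vir_B^+$. Irreducibility of $V(\phi)$ only guarantees that no vector below $v_\phi$ is annihilated by all of $Vir_B^+$; it says nothing about the much smaller subalgebra $Vir^+\otimes\ker\psi$, and no product of $Y_{m,b'}$'s ever leaves $U(Vir^+\otimes\ker\psi)$. In the most basic instance $B=\mathbb C$ one has $\ker\psi=0$, so every $Y_{r,b}$ is identically zero and your argument produces nothing; more generally, whenever $V(\phi)$ is pulled back from $Vir\otimes(B/\ker\psi)\simeq Vir$ (i.e.\ $Vir\otimes\ker\psi$ acts trivially on $V(\phi)$), all the $Y_{r,b}$ act as zero while the theorem is still supposed to hold. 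So the depth reduction cannot be salvaged by ``richness'' of the $Y$'s --- there is no such richness in general.

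The paper's proof avoids this by not trying to kill the whole second factor. It first notes (using irreducibility of $V(\phi)$) that some $d_1\otimes b$ or $d_2\otimes b$ with $b\in B$ arbitrary --- not restricted to $\ker\psi$ --- acts nontrivially on the bottom component $x_{-n}$, and then builds an operator such as $X=d_l\otimes b-\frac{\alpha+m+n+l\beta}{(\alpha+m+n+\beta)\{\alpha+m+n+1+(l-1)\beta\}}\,d_{l-1}d_1\otimes b$ for $l\gg n$, whose coefficient is tuned so that $X$ annihilates only the single basis vector $v_{m+n}$ of the intermediate-series factor (a separate operator is used when $\beta=0$). Acting on $w$, the surviving contribution from the top term is $(d_1\otimes b).x_{-n}$ tensored into a lower depth, and a choice of $l$ outside finitely many bad values makes $X.w$ a nonzero weight vector of strictly smaller depth. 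That is where the hypotheses on $\alpha,\beta$ and the explicit structure of $V'_{\alpha,\beta,\psi}$ actually enter; your observation that $\alpha+\beta\notin\mathbb Z$ makes $d_1$ act injectively on the $v_m$'s is correct but is not by itself enough to run the induction. To repair your proof you would need to replace the $Y_{r,b}$'s by operators of this cancelling type built from general $b\in B$.
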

\begin{proof}
Let $V^\phi_{\alpha, \beta , \psi}$ be cyclic on every $v_\phi \otimes v_m$ for all $m \in \mathbb Z$. Let $W$ be any non-zero sub-module of $V^\phi_{\alpha, \beta , \psi}$. Let $w \in W$ be a non-zero weight vector. Let
  $$w= \displaystyle{\sum_{i=0}^{i=n}}x_{-i}\otimes v_{m+i}, $$ 
where $x_{-i} \in V(\phi)_{\phi(d_0)-i}$, for $0\leq i \leq n$.\\
We use induction on $n$ to show that $v_\phi \otimes v_k \in W$ for some $k \in \mathbb Z$. Note that if $n=0,$ then we are done.\\
Assume that $n>0$ and $x_{-n} \neq 0.$ Then there exists a $b \in B$ such that $d_1 \otimes b.x_{-n} \neq 0$ or $d_2 \otimes b .x_{-n} \neq 0$. If both of them are zero for all $b \in B$, then $x_{-n}$ would be a highest weight vector for $V(\phi)$ and hence a scalar multiple of $v_\phi$. Then $w $ cannot be a weight vector. \\
Without loss of generality assume that  $d_1 \otimes b.x_{-n} \neq 0$ for some $b \in B$ (other case can be done in same way).\\
{\bf Case I :} Let us assume $\beta \neq 0$. Choose $l (>> n) \in \mathbb N$ in such a way that $(\alpha+m+n+\beta)\{\alpha+m+n+1+ (l-1)\beta\} \neq 0$. Let
     $$X=d_l \otimes b -\frac{\alpha+m+n+ l\beta}{(\alpha+m+n+\beta)\{\alpha+m+n+1+ (l-1)\beta\}}d_{l-1}d_1\otimes b .$$

By computing the action of $X$ on $w$ we have,\\

$X.w=$\\

$\displaystyle{\sum_{i=0}^{i=n-1}} \psi(b)\lbrace(\alpha+m+i+l\beta)-\frac{(\alpha+m+i+\beta)(\alpha+m+n+l\beta)\{\alpha+m+i+1+(l-1)\beta\}}{(\alpha+m+n+\beta)\{\alpha+m+n+1+ (l-1)\beta\}} \rbrace x_{-i}\otimes v_{m+i+l}$\\
$$-\displaystyle{\sum_{i=0}^{i=n}}\frac{(\alpha+m+n+l\beta)\{\alpha+m+i+(l-1)\beta\}}{(\alpha+m+n+\beta)\{\alpha+m+n+1+ (l-1)\beta\} }d_1\otimes b. x_{-i}\otimes v_{m+i+l-1}$$
First summation of the above expression on $X.w$ runs upto $n-1$ terms because
of $X.v_{m+n}=0.$ Moreover it is also clear that $X.w$ is a weight vector of the form $y_{0}\otimes v_{s_1}+....+y_{-r}\otimes v_{s_k}$ such that $r <n$ and $y_{-i}\in V(\phi)_{\phi(d_0)-i}.$ Therefore to complete the induction process it is sufficient to show that $X.w \neq 0.$\\
Note that the components of $X.w$ on $V(\phi)_{\phi(d_0)-i}\otimes \mathbb C v_{m+l+i} $ is 

$$ \psi(b)\lbrace(\alpha+m+i+l\beta)-\frac{(\alpha+m+i+\beta)(\alpha+m+n+l\beta)\{\alpha+m+i+1+(l-1)\beta\}}{(\alpha+m+n+\beta)\{\alpha+m+n+1+ (l-1)\beta\}} \rbrace x_{-i}\otimes v_{m+i+l}  $$
$$ -\frac{(\alpha+m+n+l\beta)\{\alpha+m+i+1+(l-1)\beta  \}}{(\alpha+m+n+\beta)\{\alpha+m+n+1+ (l-1)\beta\}}d_1\otimes b. x_{-(i+1)}\otimes v_{m+i+l}.$$
If for some $0 \leq i \leq n-1 $, $d_1\otimes b. x_{-(i+1)}$ and $x_{-i}$ are linearly independent then the components of $X.w$ on  $V(\phi)_{\phi(d_0)-i}\otimes \mathbb C v_{m+l+i} $ will be zero, only if the coefficients of $x_{-i}\otimes v_{m+i+l}$ and $d_1\otimes b. x_{-(i+1)}\otimes v_{m+i+l}$ are zero. But this is possible only for finitely many values of $l$. On the other hand if all $d_1\otimes b. x_{-(i+1)}$ and $x_{-i}$ are linearly dependent, then we consider $d_1\otimes b.x_{-n}=k_0x_{-(n-1)},$ for some $k_0 \in \mathbb C$. Now the coefficient of $x_{-(n-1)}\otimes v_{m+l+n-1}$ will be zero only for finitely many values of $l$. Hence in any case we can find $l$ large enough such that $X.w \neq 0$. 
\\
{\bf Case II :} Let us assume $\beta =0$. Choose $l (>>n) \in \mathbb N$. Note that $\alpha + \beta \notin \mathbb Z$ and $\beta =0$ implies that $\alpha \notin \mathbb Z$. Let
$$X=d_{2l}\otimes b -\frac{1}{(\alpha+m+n+1)(\alpha+m+n+l)}d_ld_{l-1}d_1\otimes b. $$ 
Note that $X.v_{m+n}=0$. Using this fact we can obtain the expression for $X.w$ as, \\
   $$X.w=\displaystyle{\sum_{i=0}^{i=n-1}}\psi(b)(\alpha+m+i)\lbrace 1- \frac{(\alpha+m+i+1)(\alpha+m+l+i)}{(\alpha+m+n+1)(\alpha+m+n+l)\rbrace}  \rbrace x_{-i}\otimes v_{m+i+2l} $$
  $$ -  \displaystyle{\sum_{i=0}^{i=n}}\frac{(\alpha+m+i)(\alpha+m+i+l-1)}{(\alpha+m+n+1)(\alpha+m+n+l)}d_1\otimes b.x_{-i}\otimes v_{m+i+2l-1} . $$
Therefore $X.w$ is a weight vector of the form $y_{0}\otimes v_{s_1}+....+y_{-r}\otimes v_{s_k}$ such that $r <n$ and $y_{-i}\in V(\phi)_{\phi(d_0)-i}.$ Now proceed like Case I and prove that we can find $l$ large enough such that $X.w \neq 0$.\\
Combining both the cases we have $v_\phi \otimes v_k \in W$ for some $k \in \mathbb Z$. This completes the proof.

\end{proof}
\begin{corollary}\label{p3c3.1}
Suppose $\alpha \pm \beta \notin \mathbb Z$ and $b \in B$ such that $\psi(b) \neq 0$. Then $V^\phi_{\alpha , \beta, \psi}$ is irreducible if $\phi|_{d_0\otimes <b>}=0,$ where $<b>$ denote the ideal generated by $b$ in $B$.
\end{corollary}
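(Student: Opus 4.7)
The plan is to invoke Theorem~\ref{t3.1}: since $\alpha + \beta \notin \mathbb{Z}$ forces $(\alpha,\beta) \neq (0,0)$, Lemma~\ref{l3.1} reduces irreducibility to showing that, for each fixed $m$, the submodule $W = U(Vir_B)(v_\phi \otimes v_m)$ contains every $v_\phi \otimes v_k$, $k \in \mathbb{Z}$.

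The key step will be to prove the identity $(d_{-1} \otimes b) v_\phi = 0$ inside $V(\phi)$. I will do this by lifting to $M(\phi)$ and showing that $(d_{-1} \otimes b)\widetilde{v_\phi}$ is a singular vector. Because $(d_m \otimes b')\widetilde{v_\phi} = 0$ for every $m > 0$ and every $b' \in B$, one has
\[
(d_m \otimes b')(d_{-1} \otimes b)\widetilde{v_\phi} = \bigl([d_m, d_{-1}] \otimes b'b\bigr)\widetilde{v_\phi}.
\]
Using $[d_m, d_{-1}] = -(m+1)d_{m-1} + \delta_{m,1}\tfrac{m^3 - m}{12} C$, the case $m \geq 2$ is immediate since $-(m+1)(d_{m-1} \otimes b'b)\widetilde{v_\phi} = 0$, while the delicate case $m = 1$ works because the central piece $\tfrac{1^3-1}{12}C$ vanishes identically, leaving $-2\phi(d_0 \otimes b'b)\widetilde{v_\phi}$, which is zero thanks to $b'b \in \langle b \rangle$ and $\phi|_{d_0 \otimes \langle b\rangle} = 0$. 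The resulting singular vector has weight $\phi(d_0) - 1 < \phi(d_0)$, so it generates a proper submodule of $M(\phi)$, which must sit inside $N(\phi)$; hence its image in $V(\phi)$ is zero.

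With the $V(\phi)$-component killed, the tensor product action on $V^\phi_{\alpha,\beta,\psi}$ collapses onto the intermediate factor:
\begin{equation*}
(d_{-1}\otimes b)(v_\phi \otimes v_m) = \psi(b)(\alpha + m - \beta)\, v_\phi \otimes v_{m-1},
\quad
(d_1 \otimes b)(v_\phi \otimes v_m) = \psi(b)(\alpha + m + \beta)\, v_\phi \otimes v_{m+1},
\end{equation*}
where the second formula uses $(d_1 \otimes b) v_\phi = 0$ since $d_1 \otimes b \in Vir^+_B$. Both coefficients are nonzero: $\psi(b) \neq 0$ by hypothesis, $\alpha - \beta \notin \mathbb{Z}$ gives $\alpha + m - \beta \neq 0$, and $\alpha + \beta \notin \mathbb{Z}$ gives $\alpha + m + \beta \neq 0$. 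Iterating the two operations yields $v_\phi \otimes v_k \in W$ for every $k \in \mathbb{Z}$, completing the reduction to Theorem~\ref{t3.1}.

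The main obstacle is to recognise this singular-vector phenomenon — why the relatively mild condition $\phi|_{d_0 \otimes \langle b\rangle} = 0$, with no constraint on $\phi(C \otimes \cdot)$, already suffices to force $(d_{-1} \otimes b) v_\phi = 0$ in $V(\phi)$. This hinges on the lucky fact that the Virasoro cocycle vanishes on the pair $(d_1, d_{-1})$, so that the bracket $[d_1 \otimes b', d_{-1} \otimes b]$ lies entirely in $d_0 \otimes \langle b\rangle$; the analogous computation for $(d_{-n} \otimes b)v_\phi$ with $n \geq 2$ would pick up a $C$-term, but fortunately only the $n=1$ case is needed to shift the intermediate index up and down.
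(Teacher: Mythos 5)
Your proposal is correct and follows essentially the same route as the paper: both hinge on showing $(d_{-1}\otimes b)v_\phi=0$ (the paper argues by contradiction in $V(\phi)$ that a nonzero such vector would be a highest weight vector, while you phrase it as a singular vector in $M(\phi)$ landing in $N(\phi)$ --- the same computation with $[d_1\otimes b', d_{-1}\otimes b]=-2d_0\otimes b'b$), and then both use $d_{\pm 1}\otimes b$ to shift the index of $v_\phi\otimes v_m$ up and down, with the coefficients nonvanishing because $\alpha\pm\beta\notin\mathbb Z$, before invoking Theorem~\ref{t3.1} and Lemma~\ref{l3.1}.
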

\begin{proof}
We assert that $d_{-1}\otimes b.v_\phi=0$. If not, then \\

$d_1\otimes a.(d_{-1}\otimes b.v_\phi)=[d_1\otimes a,d_{-1}\otimes b].v_\phi=-2d_0\otimes ab.v_\phi=\phi(d_0\otimes ab).v_\phi=0$ for all $a \in B$.\\ 
Moreover $d_2\otimes a.(d_{-1}\otimes b.v_\phi)=0$ for all $a \in B$. Thus $d_{-1}\otimes b.v_\phi$ is a highest weight vector of $V(\phi)$, a contradiction.\\
Now, $$d_{-1}\otimes b.(v_\phi\otimes v_{n+1})=\psi(b)(\alpha +n+1 -\beta)v_\phi \otimes v_n.   $$
This implies that $U(Vir_B)(v_\phi \otimes v_n )\subseteq U(Vir_B)(v_\phi\otimes v_{n+1}) $ for all $n \in \mathbb Z$. On the other hand,
  $$ d_1\otimes b.(v_\phi\otimes v_{n})=\psi(b)(\alpha + n+\beta)(v_\phi\otimes v_{n+1})  $$
  implies that $U(Vir_B)(v_\phi \otimes v_n )\supseteq U(Vir_B)(v_\phi\otimes v_{n+1}) $ for all $n \in \mathbb Z$. Therefore,  $U(Vir_B)(v_\phi \otimes v_m )= U(Vir_B)(v_\phi\otimes v_{n}) $ for all $n,m \in \mathbb Z.$ Now by Theorem \ref{t3.1} and Lemma \ref{l3.1}, $V^\phi_{\alpha, \beta, \psi}$ is irreducible.
\end{proof}
\begin{remark}
In particular, if $B$ is a not a local ring, there exists a non-unit $b \in B$ such that $\psi(b)\neq 0$ and hence in this case $<b> \subsetneq B.$
\end{remark}
\section{Isomorphism Class}
In this section we find out conditions when two modules $V^{\phi_1}_{\alpha_1, \beta_1,\psi_1}$ and $V^{\phi_2}_{\alpha_2, \beta_2,\psi_2}$  will be isomorphic as $Vir_B$ modules. 
\begin{lemma}\label{l34.1}
$V^{\phi_1}_{\alpha_1, \beta_1,\psi_1} \simeq V^{\phi_2}_{\alpha_2, \beta_2,\psi_2}$ imply that $\psi_1=\psi_2$.
\end{lemma}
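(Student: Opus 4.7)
The plan is to take a hypothetical $Vir_B$-isomorphism $f\colon V^{\phi_1}_{\alpha_1,\beta_1,\psi_1}\to V^{\phi_2}_{\alpha_2,\beta_2,\psi_2}$, apply it to the distinguished vectors $v_{\phi_1}\otimes v_k$, and extract $\psi_1(b)-\psi_2(b)$ from the action of a single well-chosen element of $U(Vir_B)$. First I match $d_0$-weights: coincidence of the weight sets forces $s:=\phi_1(d_0)+\alpha_1-\phi_2(d_0)-\alpha_2\in\mathbb Z$, and each $f(v_{\phi_1}\otimes v_k)$ admits a finite expansion
$$f(v_{\phi_1}\otimes v_k)=\sum_{i=0}^{M_k}y^{(k)}_{-i}\otimes v_{k+s+i},\qquad y^{(k)}_{-i}\in V(\phi_2)_{\phi_2(d_0)-i}.$$

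The crucial relation comes from the fact that $v_{\phi_1}$ is annihilated by $Vir^+_B$. For any $n>0$ and $b\in B$, expanding on the two tensor factors of the source gives $(d_n\otimes b)(v_{\phi_1}\otimes v_k)=\psi_1(b)\,d_n(v_{\phi_1}\otimes v_k)$, because the first-factor pieces vanish. Since $f$ commutes with both $d_n\otimes b$ and $d_n$, I would conclude
$$\bigl((d_n\otimes b)-\psi_1(b)d_n\bigr)f(v_{\phi_1}\otimes v_k)=0 \qquad (n>0).$$
Substituting the expansion of $f(v_{\phi_1}\otimes v_k)$, the first-factor contributions $(d_n\otimes b)y^{(k)}_{-i}$ and $d_n y^{(k)}_{-i}$ vanish whenever $n>M_k$ (their weight $\phi_2(d_0)+n-i$ exceeds the top weight of $V(\phi_2)$), so only the second-factor action survives and the identity reduces to
$$\bigl(\psi_2(b)-\psi_1(b)\bigr)\sum_{i=0}^{M_k}(\alpha_2+k+s+i+n\beta_2)\,y^{(k)}_{-i}\otimes v_{k+s+n+i}=0.$$

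To finish I would, for each $b$, produce a pair $(k,n)$ with $n>M_k$ making the sum nonzero, which then forces $\psi_1(b)=\psi_2(b)$. The summands for different $i$ lie in distinct direct summands $V(\phi_2)\otimes v_{k+s+n+i}$, so the sum vanishes iff every coefficient $(\alpha_2+k+s+i+n\beta_2)y^{(k)}_{-i}$ is zero. Injectivity of $f$ guarantees that some $y^{(k)}_{-i}$ is nonzero. If $\beta_2\neq 0$ I simply choose $n$ large enough to avoid the finitely many $n$ that zero out any of the scalars $\alpha_2+k+s+i+n\beta_2$. If $\beta_2=0$ the scalars do not depend on $n$, so instead I pick $k$ large and positive so that $\alpha_2+k+s\notin\{0,-1,-2,\dots\}$, making every scalar nonzero. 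Either way, $\psi_1(b)=\psi_2(b)$ for every $b\in B$, so $\psi_1=\psi_2$. The only real subtlety is this split between $\beta_2\neq 0$ and $\beta_2=0$: in the latter case the $n$-parameter is useless and one has to exploit freedom in $k$ instead.
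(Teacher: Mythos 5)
Your proof is correct and rests on the same mechanism as the paper's: for an index $n$ larger than the depth of the weight-vector expansion, $d_n\otimes b$ annihilates all $V(\phi_2)$-components and acts only through $\psi_2(b)$ on the intermediate factor, with the $\beta_2=0$ case requiring separate care exactly as in the paper. The only organizational difference is that you compare $\psi_1(b)$ and $\psi_2(b)$ directly by applying $d_n\otimes b-\psi_1(b)d_n$ to $f(v_{\phi_1}\otimes v_k)$, whereas the paper first reduces the claim to $\ker\psi_1=\ker\psi_2$ and then applies $d_l\otimes b$ with $b\in\ker\psi_1\setminus\ker\psi_2$ to the preimage of $v_{\phi_2}\otimes v_k$.
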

\begin{proof}
To prove $\psi_1=\psi_2$, it is sufficient to prove that $ker\psi_1=ker \psi_2$. Because then it is easy to see that, $\psi_1=c\psi_2$ for some $c \in \mathbb C$. Hence $\psi_1(1)=\psi_2(1)=1$ gives us $\psi_1=\psi_2$.\\
Let $ker\psi_1 \neq ker\psi_2$ and $b \in ker\psi_1 - ker\psi_2 $. Let $T$ be the isomorphism between $V^{\phi_1}_{\alpha_1, \beta_1,\psi_1}$ and $ V^{\phi_2}_{\alpha_2, \beta_2,\psi_2}$. Then for some $k \in \mathbb N$, $v_{\phi_2} \otimes v_k$ has a pre-image under $T$. Since $T$ is an isomorphism this pre-image is a weight vector. Let\\
 $$ T(\displaystyle{\sum_{i=0}^{i=n}}  c_ix_{-i}\otimes v_{i+p})=v_{\phi_2}\otimes v_k ,$$
 for some $p \in \mathbb Z$ and $x_{-i} \in V(\phi)_{\phi(d_0)-i}$, for $0\leq i \leq n$.\\

{\bf Case I :} Let us assume $\beta_2 \neq 0.$ Choose $l (>>n) \in \mathbb N$ such that $\alpha_2+k+l\beta_2 \neq 0$. Now consider, 
\begin{align}\label{a4.1}
T(d_l\otimes b.\displaystyle{\sum_{i=0}^{i=n}}  c_ix_{-i}\otimes v_{i+p})=d_l\otimes b.v_{\phi_2}\otimes v_k,
\end{align}

which is absurd, since left hand side of \ref{a4.1} is zero whereas right side is non-zero.\\

{\bf Case II :} Let us assume $\beta_2 =0$. Choose $l >>n,$ then right hand side of \ref{a4.1} becomes $\psi_2(b)(\alpha_2+k)v_{\phi_2}\otimes v_{l+k} \neq 0$, since $\alpha_2 +k =0 $ imply that $\alpha_2 \in \mathbb Z, $ so $\alpha_2=0=k$, a contradiction.

\end{proof}

\begin{theorem}\label{p3t4.1}
$V^{\phi_1}_{\alpha_1, \beta_1,\psi_1} \simeq V^{\phi_2}_{\alpha_2, \beta_2,\psi_2}$ if and only if $\psi_1=\psi_2, \phi_1=\phi_2, \alpha_1=\alpha_2, \beta_1=\beta_2.$
\end{theorem}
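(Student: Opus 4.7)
The $(\Leftarrow)$ direction is immediate, since matching all four parameters produces the same tensor product up to relabeling. For $(\Rightarrow)$, suppose $T: V^{\phi_1}_{\alpha_1,\beta_1,\psi_1} \to V^{\phi_2}_{\alpha_2,\beta_2,\psi_2}$ is a $Vir_B$-isomorphism; Lemma \ref{l34.1} already yields $\psi_1 = \psi_2 =: \psi$. My plan is to show that $T$ must send each generator $v_{\phi_1}\otimes v_m$ of Lemma \ref{l3.1} to a scalar multiple of $v_{\phi_2}\otimes v_m$, and then extract the remaining equalities from $T$-equivariance applied to $d_n\otimes b$ and $C\otimes b$.

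First I would fix $m\in\mathbb{Z}$ (with $m\ne 0$ if $(\alpha_1,\beta_1)=(0,0)$) and write $T(v_{\phi_1}\otimes v_m) = \sum_{i=0}^n y_{-i}\otimes v_{p+i}$, with $y_{-i}\in V(\phi_2)_{\phi_2(d_0)-i}$, $y_{-n}\ne 0$, and $p\in\mathbb{Z}$ forced by weight matching. Mimicking the construction in Proposition \ref{p3.1}, for every sufficiently generic $N$ the operator $w_N = d_{2N} - \tfrac{\alpha_1+m+2N\beta_1}{(\alpha_1+m+N\beta_1)(\alpha_1+m+N+N\beta_1)}\,d_N^2$ annihilates $v_{\phi_1}\otimes v_m$ in the source module. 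Applying $T$ and using $d_N y_{-i}=0$ whenever $N>n\ge i$, the identity $w_N(T(v_{\phi_1}\otimes v_m))=0$ becomes, for each $i$ with $y_{-i}\ne 0$, a polynomial identity in $N$. Its leading $N^3$ coefficient equals $2\beta_1\beta_2(\beta_1-\beta_2)$, which forces $\beta_1 = \beta_2 =: \beta$ once the degenerate regimes where $\beta_1$ or $\beta_2$ vanishes are treated separately with a replacement annihilator in the spirit of Case~II of Theorem \ref{t3.1} (for example a combination of $d_{2l}$ and $d_l d_{l-1} d_1$). With $\beta_1=\beta_2$ in hand, factoring the next coefficients exposes $\alpha_1-\alpha_2-p-i$ as a divisor of each surviving relation, so at most one index $i=i^*$ can have $y_{-i}\ne 0$. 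Since $p+i^*=\alpha_1-\alpha_2\in\mathbb{Z}$ and $0\le Re\,\alpha_j<1$, this implies $\alpha_1=\alpha_2=:\alpha$ and $p=-i^*$, i.e.\ $T(v_{\phi_1}\otimes v_m)=y^{(m)}\otimes v_m$ with $y^{(m)}\in V(\phi_2)_{\phi_2(d_0)-i^*}$ for some $i^*\ge 0$ independent of $m$.

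Next I would show $i^*=0$ and $\phi_1=\phi_2$. Applying $d_n\otimes b$ (for $n\ge 1$, $b\in B$) to $v_{\phi_1}\otimes v_m$ gives $\psi(b)(\alpha+m+n\beta)\,v_{\phi_1}\otimes v_{m+n}$ because $v_{\phi_1}$ is a highest weight vector in $V(\phi_1)$; equating the $T$-image of this with $(d_n\otimes b)\,T(v_{\phi_1}\otimes v_m)$ and separating the components supported at $v_m$ versus $v_{m+n}$ forces $(d_n\otimes b)y^{(m)}=0$ for every $n\ge 1$ and $b\in B$. Hence $y^{(m)}$ is a highest weight vector in the irreducible module $V(\phi_2)$, so $i^*=0$ and $y^{(m)}=c_m v_{\phi_2}$; the same comparison propagates a single nonzero scalar $c$ across all $m$. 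A final application of $T$-equivariance for $d_0\otimes b$ and $C\otimes b$ then gives $\phi_1(d_0\otimes b)=\phi_2(d_0\otimes b)$ and $\phi_1(C\otimes b)=\phi_2(C\otimes b)$ for every $b\in B$, hence $\phi_1=\phi_2$. The main obstacle is the polynomial analysis of paragraph~2: the natural annihilator $w_N$ degenerates in a handful of special regimes (notably $(\alpha_1,\beta_1)=(0,0)$ and the cases $\beta_1=0$ or $\beta_2=0$), each of which must be handled with an ad~hoc replacement annihilator in order to pin $T(v_{\phi_1}\otimes v_m)$ down to a single summand.
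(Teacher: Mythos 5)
Your overall architecture matches the paper's: determine $T$ on the generators $v_{\phi_1}\otimes v_m$ by weight considerations plus equivariance under raising operators of large index, conclude $T(v_{\phi_1}\otimes v_m)=c\,v_{\phi_2}\otimes v_m$, and then read off $\phi_1=\phi_2$ from $d_0\otimes b$ and $C\otimes b$. Two of your intermediate steps, however, genuinely differ. (i) To force $\beta_1=\beta_2$, $\alpha_1=\alpha_2$ and the collapse to a single summand, the paper compares the actions of $d_md_n$ and $d_{m+n}$ for all $m,n>k$ and extracts a two-variable polynomial identity (equations \ref{p3a4.6}--\ref{p3a4.10}); you instead push the one-parameter annihilator $w_N$ of Proposition \ref{p3.1} through $T$ and get a one-variable polynomial identity in $N$. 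Your leading coefficient $2\beta_1\beta_2(\beta_1-\beta_2)$ is correct, and the constant term gives $(\alpha_1+m)(\alpha_2+p+i)(\alpha_1+m-\alpha_2-p-i)=0$, which plays the role of \ref{p3a4.10}; this route is leaner, but in the regime $\beta_1\neq 0$, $\beta_2=0$ the identity only yields $\alpha_2+p+i=0$, and the needed contradiction then comes from injectivity of $T$ (as in the paper's Sub-case II: $d_n$ kills $x\otimes v_0$ for large $n$ while $d_n(v_{\phi_1}\otimes v_l)\neq 0$), not from a replacement annihilator in the spirit of Case II of Theorem \ref{t3.1} --- so that case still needs to be written out along the paper's lines. (ii) To show the single surviving component is $v_{\phi_2}$ itself, the paper uses a generation/surjectivity argument via Lemma \ref{l3.1}; your argument --- separate the $v_m$- and $v_{m+n}$-components of $(d_n\otimes b)T(v_{\phi_1}\otimes v_m)$ to conclude $(d_n\otimes b)y^{(m)}=0$, hence $y^{(m)}$ is a highest weight vector --- is different and cleaner, and it simultaneously propagates the scalar $c$ across the generators.

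Two slips to fix: the weight and constant-term bookkeeping should give $\alpha_2+p+i^*=\alpha_1+m$, hence $\alpha_1=\alpha_2$ and $p+i^*=m$ (not $p+i^*=\alpha_1-\alpha_2$, and not $p=-i^*$); and the claim that $i^*$ is independent of $m$ is unjustified at the point where you assert it, though it becomes irrelevant once you prove $i^*=0$ for each $m$ separately in the final step.
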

\begin{proof}
Let $T:V^{\phi_1}_{\alpha_1, \beta_1,\psi_1} \to V^{\phi_2}_{\alpha_2, \beta_2,\psi_2}$ be the isomorphism.\\
{\bf Case I :} Let us assume $(\alpha_1,\beta_1)\neq (0,0)$. Since $T$ is an isomorphism, for all $l \in\mathbb Z$ image of $v_{\phi_1}\otimes v_l$ will be a weight vector of same weight with $v_{\phi_1}\otimes v_l$. Let
\begin{align}\label{p3a4.2}
T(v_{\phi_1}\otimes v_l)= \displaystyle{\sum_{i=0}^{i=k}}  c_ix_{-i}\otimes v_{i+p},
\end{align} 
 for some $p \in \mathbb Z$, $c_i \in \mathbb C$ and $x_{-i} \in V(\phi)_{\phi(d_0)-i}$, for $0\leq i \leq k.$ Since $v_{\phi_1}\otimes v_0$ and its image has same weight, so we have $\phi_1(d_0)+\alpha_1+l=\phi_2(d_0)+\alpha_2+p$.\\
   For all $m,n > k$, $ T(d_m d_n.v_{\phi_1}\otimes v_l)=d_m d_n.T(v_{\phi_1}\otimes v_l)$ implies that,
   
\begin{align}\label{p3a4.3}
(\alpha_1+l+n\beta_1)(\alpha_1+n+l+m\beta_1)T(v_{\phi_1}\otimes v_{m+n+l})=
\end{align}  
\begin{align*}
\displaystyle{\sum_{i=0}^{i=k}}c_i(\alpha_2+p+i+n\beta_2)(\alpha_2+p+i+n+m\beta_2)   x_{-i}\otimes v_{i+p+m+n}.
\end{align*}
Again, for all $m,n >k$, $T(d_{m+n}.v_{\phi_1}\otimes v_l)=d_{m+n}.T(v_{\phi_1}\otimes v_l)$ implies that,

\begin{align}\label{p3a4.4}
\{\alpha_1+l+(m+n)\beta_1\}T(v_{\phi_1}\otimes v_{m+n+l})=\displaystyle{\sum_{i=0}^{i=k}}c_i (\alpha_2+p+i+(m+n)\beta_2)              x_{-i}\otimes v_{i+p+m+n}.
\end{align}
From \ref{p3a4.3} and \ref{p3a4.4} for all $i$ with $c_i \neq 0$ we have, 
\begin{align}\label{p3a4.5}
(\alpha_1+l+n\beta_1)(\alpha_1+l+n+m\beta_1)(\alpha_2+p+i+(m+n)\beta_2)             =
\end{align}
\begin{align*}
\{\alpha_1+l+(m+n)\beta_1\}(\alpha_2+p+i+n\beta_2)(\alpha_2+p+i+n+m\beta_2).
\end{align*}
From \ref{p3a4.5} we obtain a polynomial in $m,n $, which is given by
$$\beta_1\beta_2(\beta_1-\beta_2)mn(m+n) +\{(\alpha_1+l)(\alpha_2+p+i)(\beta_2-\beta_1)+\beta_1(\alpha_2+p+i)^2-\beta_2(\alpha_1+l)^2  \}(m+n)$$
$$+\{(\alpha_1+l)\beta_2(\beta_2-1-2\beta_1)+\beta_1(\alpha_2+p+i)(1+2\beta_2-\beta_1)\}mn+\beta_1\beta_2(\alpha_2+p+i-\alpha_1-l)(m^2+n^2) $$
$$+(\alpha_1+l)(\alpha_2+p+i-\alpha_1-l)(\alpha_2+p+i)=0. $$
Since this equation holds for all $m,n >k$, so we have,
\begin{align}\label{p3a4.6}
\beta_1\beta_2(\beta_1-\beta_2)=0,
\end{align}
\begin{align}\label{p3a4.7}
(\alpha_1+l)(\alpha_2+p+i)(\beta_2-\beta_1)+\beta_1(\alpha_2+p+i)^2-\beta_2(\alpha_1+l)^2=0,
\end{align}
\begin{align}\label{p3a4.8}
(\alpha_1+l)\beta_2(\beta_2-1-2\beta_1)+\beta_1(\alpha_2+p+i)(1+2\beta_2-\beta_1)=0,
\end{align}
\begin{align}\label{p3a4.9}
\beta_1\beta_2(\alpha_2+p+i-\alpha_1-l)=0,
\end{align}
\begin{align}\label{p3a4.10}
(\alpha_1+l)(\alpha_2+p+i-\alpha_1-l)(\alpha_2+p+i)=0.
\end{align}
{\bf Sub-case I :} Let $\beta_1 =0$, then $\alpha_1+l \neq 0$, since $(\alpha_1,\beta_1)\neq (0,0)$ and $0 \leq Re\alpha_1 <1.$  From \ref{p3a4.8}, we have $\beta_2=0$, as $\beta_2 \neq 1$. If $\alpha_2+p+i = 0$, then $0 \leq Re\alpha_2 <1$ implies that $\alpha_2=p+i=0$. Then from \ref{p3a4.2}, we have 
     $$T(v_{\phi_1}\otimes v_l)= c_{-p}x_p\otimes v_0. $$
Then choosing a natural number $n > -p$ we have, 
  $$T(d_n.v_{\phi_1}\otimes v_l)=0 ,$$ which implies that $\alpha_1+l=0$, a contradiction. Therefore $\alpha_2+p+i \neq 0$, Hence from \ref{p3a4.10} and $0 \leq  Re\alpha_1, Re\alpha_2 <1,$ we have $\alpha_1-\alpha_2=p+i-l=0.$\\
{\bf Sub-case II :} Let $\beta_2 = 0$, then from \ref{p3a4.8} we have $\beta_1 =0$ or $\alpha_2 +p +i =0$, as $\beta_1\neq 1$. If $\beta_1 =0 $, then by Sub-case I we have $\alpha_1=\alpha_2$. If $\beta_1 \neq 0$, then $\alpha_2 +p+i=0$ implies that $\alpha_2=p+i=0$. Again from Sub-case I we get that $\alpha_1 \neq 0 $ implies $\alpha_2+p+i \neq 0$, hence $\alpha_1=0$. Moreover we have, 
 $$ T(v_{\phi_1}\otimes v_l)= c_{-p}x_p\otimes v_0.  $$
  Hence for all $n>-p$ we have, 
  $$T(d_n.v_{\phi_1}\otimes v_l)=0 ,$$ 
  this implies that $l+n\beta_1=0$ for all $n>-p$, a contradiction.\\
  {\bf Sub-case III :} Let $\beta_1\beta_2 \neq 0$. Note that from \ref{p3a4.6} and \ref{p3a4.9} we have $\beta_1=\beta_2$, $\alpha_1=\alpha_2$.\\ 
  
Hence in all cases we have $\alpha_1=\alpha_2$, $\beta_1=\beta_2$ and $p+i=l$. Therefore $T$ is given by $  T(v_{\phi_1}\otimes v_l)= c_{l}x_{-(l-p_l)}\otimes v_l,  $ for some $c_{l} \in \mathbb C^{*}$ and $p_l\in \mathbb Z$ such that $l\geq p_l$. From Lemma \ref{l3.1} we have,
   $$\displaystyle{\sum_{l \in \mathbb Z}} U(Vir_B^-)(v_{\phi_1} \otimes v_l)= V^{\phi_1}_{\alpha_1,\beta_1, \psi_1} . $$
   Since $T$ is an isomorphism, we have
   $$ \displaystyle{\sum_{l \in \mathbb Z}} U(Vir_B^-)(x_{-(l-p_l)} \otimes v_l)= V^{\phi_2}_{\alpha_2,\beta_2, \psi_2} .  $$
Last equality holds only when $p_l=l$ for all $l \in \mathbb Z$ . Hence we have  $T(v_{\phi_1}\otimes v_l)= c_{l}'v_{\phi_2}\otimes v_l,  $ for some $c_l' \in \mathbb C^*$ and for all $l \in \mathbb Z$. Now consider the relations,
\begin{align}\label{p3a4.11}
T(d_0\otimes b.v_{\phi_1}\otimes v_l) =d_0\otimes bT(v_{\phi_1}\otimes v_l) 
\end{align} 
\begin{align}\label{p3a4.12}
T(C\otimes b.v_{\phi_1}\otimes v_l) =C\otimes bT(v_{\phi_1}\otimes v_l) 
\end{align}
 for all $b \in B$.
 Then using Lemma \ref{l34.1}, \ref{p3a4.11} and \ref{p3a4.12} we have $\phi_1(d_0\otimes b)=\phi_2(d_0\otimes b)$ and $\phi_1(C\otimes b)=\phi_2(C\otimes b)$ for all $b \in B$.\\
 
 {\bf Case II :} Let $(\alpha_1,\beta_1)=(0,0).$ In this case we proceed similarly like Case I and conclude that  $T(v_{\phi_1}\otimes v_l)= c_{l}'v_{\phi_2}\otimes v_l,  $ for some $c_l' \in \mathbb C^*$ and for all $l \in \mathbb Z-\{0\}$. Hence we have $ \phi_1=\phi_2, \alpha_1=\alpha_2, \beta_1=\beta_2.$\\
Therefore by Case I, Case II and Lemma \ref{l34.1} we have the result.

\end{proof}

\begin{remark}
Corollary \ref{p3c3.1} combined with Theorem \ref{p3t4.1} gives us a collection of non-isomorphic irreducible modules for $Vir_B$ with infinite dimensional weight spaces.
\end{remark}

\vspace{2cm}

\end{document}